\newcommand{\be}{\begin{equation}}
\newcommand{\ee}{\end{equation}}
\newcommand{\dalign}[1]{\[\begin{aligned} #1 \end{aligned}\]}
\title[Arithmetic progressions of Carmichael numbers]
{Arithmetic progressions of Carmichael numbers
in a reduced residue class}
\author[W.~D.~Banks]{William Banks}
\address{Department of Mathematics, 
         University of Missouri, 
         Columbia MO 65211 USA.}
\email{bankswd@missouri.edu}
\date{\today}
\begin{document}

\begin{abstract}
Fix coprime natural numbers $a,q$.
Assuming the Prime $k$-tuple Conjecture, we show that
there exist arbitrarily long arithmetic progressions of
Carmichael numbers, each of which lies in the reduced
residue class $a$~mod~$q$ and is a product
of three distinct prime numbers.
\end{abstract}

\subjclass[2010]{Primary: 11N25, 11B25; Secondary: 11N13}
\keywords{Carmichael number, arithmetic progression}

\maketitle



{\Large\section{Introduction}}

For any prime number $n$, Fermat's little theorem asserts that
\be\label{eq:FLT}
x^n\equiv x\bmod n\qquad (x\in\Z).
\ee
Around 1910, Carmichael initiated the study of
\emph{composite} numbers~$n$ with the same property; these
integers are now called \emph{Carmichael numbers}.  In 1994
the existence of infinitely many Carmichael numbers was 
established by Alford, Granville and Pomerance~\cite{AGP};
see also~\cite{GranPom}.

Since both primes and Carmichael numbers share the property
\eqref{eq:FLT},  it seems natural to ask whether certain
known results about primes can also be proved for Carmichael numbers,
and indeed this theme has been explored by several authors.
For example, the analogue of \emph{Dirichlet's theorem} about the
infinitude of primes in a reduced residue class has been
established for  Carmichael numbers by Wright~\cite{Wright},
building on ideas of Banks and Pomerance~\cite{BanksPomer}
and of Matom\"aki~\cite{Mato}.

In 2008 a stunning and celebrated work of Green and Tao~\cite{GreenTao}
established the existence of arbitrarily long arithmetic progressions
in the primes; their landmark paper in additive number theory at once
resolved the longstanding open problem about prime numbers and also
an important case of a famous conjecture of Erd\H os on arithmetic
progressions. In the present note, we give a conditional proof of a
similar result for Carmichael numbers.

\bigskip

\begin{theorem}\label{thm:main}
Let $a,q$ be fixed coprime natural numbers.
Under the Prime $k$-tuple Conjecture, there exist arbitrarily long
arithmetic progressions of Carmichael numbers, each of which
lies in the reduced residue class $a$~mod~$q$ and is a product
of three distinct prime numbers.
\end{theorem}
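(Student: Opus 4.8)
\section{Proof of Theorem~\ref{thm:main}}

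The proof rests on Korselt's criterion: a composite integer $n$ is a Carmichael number precisely when $n$ is squarefree and $p-1\mid n-1$ for every prime $p\mid n$. For a given length $N$ the plan is to produce a progression of $N$ Carmichael numbers of the shape
\[
n_j \;=\; (1+bm)(1+cm)(1+a_jm)\qquad(0\le j\le N-1),
\]
where $b$ and $c$ are two fixed primes, $m$ is a single (large) parameter, and $a_j=a_0+j\alpha$ runs over an arithmetic progression. The point is that, with $m$ fixed, the right‑hand side is an affine function of $a_j$, hence of $j$: indeed $n_{j+1}-n_j=\alpha m(1+bm)(1+cm)$ for all $j$, so the $n_j$ automatically form an arithmetic progression. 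The $N+2$ integers $1+bm$, $1+cm$, $1+a_0m,\dots,1+a_{N-1}m$ will be made simultaneously prime by an appeal to the Prime $k$-tuple Conjecture. (This shape is a degenerate cousin of Chernick's form $(6m+1)(12m+1)(18m+1)$; unlike Chernick's it is a Carmichael number only for $m$ in suitable residue classes, and that restriction is exactly what buys the extra freedom — note that a product of three nonconstant linear forms in one variable cannot otherwise be forced to lie in a long progression.)

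The first step is the Korselt computation. Writing $P=1+a_jm$, $Q=1+bm$, $R=1+cm$, the relation $P\equiv1\pmod{P-1}$ gives $n_j-1\equiv QR-1\pmod{P-1}$, and since $QR-1=m\bigl((b+c)+bcm\bigr)$ the condition $P-1\mid n_j-1$ becomes $a_j\mid(b+c)+bcm$; likewise $Q-1\mid n_j-1$ becomes $b\mid a_j(1+cm)+c$ and $R-1\mid n_j-1$ becomes $c\mid a_j(1+bm)+b$. Requiring $b\mid\alpha$ and $c\mid\alpha$ — which, one checks, is forced in any case once $m$ is large, since otherwise $b\mid R$ or $c\mid R$ — makes $a_j\equiv a_0\pmod b$ and $a_j\equiv a_0\pmod c$, so the last two conditions no longer depend on $j$, and all three conditions together amount to three congruences on $m$: one modulo $\operatorname{lcm}(a_0,\dots,a_{N-1})$, one modulo $b$, one modulo $c$. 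If $b$ and $c$ are distinct primes not dividing $a_0$ (hence coprime to every $a_j$), each congruence is solvable and the three moduli are pairwise coprime, so by the Chinese Remainder Theorem their conjunction is a single condition $m\equiv m^{\ast}\pmod M$.

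It remains to fit in the reduced residue class and to verify admissibility. Since $n_j-n_0=j\alpha m(1+bm)(1+cm)$, imposing $q\mid\alpha$ forces $n_j\equiv n_0\pmod q$, so it suffices to arrange $n_0=(1+a_0m)(1+bm)(1+cm)\equiv a\pmod q$; a short prime‑power‑by‑prime‑power check (prescribing the residues of $b,c$ modulo $q$ by Dirichlet's theorem and taking $a_0$ coprime to $q$) shows this can be secured by one further congruence on $m$ modulo $q$, consistent with the earlier ones because $\gcd(M,q)=1$. For admissibility one observes that $1+dm\equiv1\pmod\ell$ whenever $\ell\mid m$, so the only primes that can be fixed divisors of $(1+bm)(1+cm)\prod_j(1+a_jm)$ on the progression $m\equiv m^{\ast}\pmod M$ are those dividing $M=bc\cdot\operatorname{lcm}(a_0,\dots,a_{N-1})$: for $\ell\in\{b,c\}$ a direct check shows no factor vanishes at $m\equiv m^{\ast}$, and for $\ell\mid\operatorname{lcm}(a_j)$ one secures the same by taking $\alpha$ also divisible by every prime $\le N$ and then choosing $a_0$ (still essentially free) to avoid a fixed finite set of residue classes — the relevant "bad" primes turn out to exceed $N$ and to be coprime to $q$, so this is possible. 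The Prime $k$-tuple Conjecture then yields arbitrarily large $m\equiv m^{\ast}\pmod M$ for which all $N+2$ forms are prime; for any such $m$ the integers $n_0<n_1<\dots<n_{N-1}$ are products of three distinct primes, are squarefree, satisfy Korselt's criterion, lie in the class $a\bmod q$, and form an arithmetic progression. Since $N$ was arbitrary, the theorem follows.

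The decisive idea is the one behind the construction: fix two of the three prime factors and let only the third run through an arithmetic progression, so that the Carmichael numbers become a fixed multiple of an arithmetic progression. The step calling for the most care is making the Korselt congruences, the condition $n_0\equiv a\pmod q$, and the admissibility of the $(N+2)$-tuple of linear forms hold simultaneously: one must select $b,c,\alpha,a_0$ in the right order (roughly: $b,c$ first, then $\alpha$, then $a_0$, then $m$) so that these demands do not collide. This, however, is routine Chinese‑remainder bookkeeping rather than a genuine obstruction.
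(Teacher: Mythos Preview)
Your strategy is the same as the paper's: build Chernick-type triples $(1+bm)(1+cm)(1+a_jm)$, freeze two of the three linear factors and let the third run through an arithmetic progression (so the products do too), impose Korselt's congruences and the residue condition mod~$q$ on $m$ via CRT, and finish with the Prime $k$-tuple Conjecture on the resulting $(N+2)$-tuple of forms. The one place where your sketch is thin is exactly where the paper works hardest: your ``short prime-power-by-prime-power check'' that $(1+a_0m)(1+bm)(1+cm)\equiv a\pmod q$ can be arranged is not as short as advertised --- the paper handles it by splitting $q=q_\sharp q_\flat$ according to whether $v_p(a-1)<v_p(q)$, pinning $v_p(m)=v_p(a-1)$ on $q_\sharp$, and then choosing the three coefficients so that two factors are $\equiv a$ and one is $\equiv a^{-1}\pmod q$ --- so expect to expand that step (and tighten the admissibility bookkeeping for primes near $N$) when you write it out in full.
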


In light of this result, we conjecture that every reduced residue
class $a$~mod~$q$ contains arbitrarily long arithmetic
progressions of Carmichael numbers. 

\bigskip

\subsection{Acknowledgements.}
The original draft of this manuscript established (under the Prime
$k$-tuple Conjecture) the existence of arbitrarily long
arithmetic progressions of Carmichael numbers. The author thanks
Andrew Granville for sharing a simpler proof (elements of which
are incorporated here) and for posing the question as to whether
the same result holds true in an arbitrary reduced residue
class $a$~mod~$q$.

{\Large\section{The Prime $k$-tuple Conjecture}}

A $k$-tuple of linear forms in $\ZZ[X]$, denoted by
$$
\cH(X) = \{g_jX + h_j\}_{j=1}^k, 
$$
is said to be \emph{admissible} if the associated polynomial
$f_\cH(X)\defeq\prod_{j=1}^k(g_jX + h_j)$ 
has no fixed prime divisor, that is, if 
$$
\big|\{n \bmod p : f_\cH(n) \equiv 0 \bmod p\}\big|\ne p
$$
for every prime  $p$.
In this note we consider only $k$-tuples for which
\be\label{eq:vandermonde}
g_1,\ldots,g_k > 0 
\mand
\prod_{1 \le i < j\le k}(g_ih_j - g_jh_i) \ne 0.
\ee
The \emph{Prime $k$-tuple Conjecture} asserts that if 
$\cH(X)$ is admissible and satisfies~\eqref{eq:vandermonde}, then 
$
\cH(n) = \{g_jn + h_j\}_{j=1}^k
$
is a $k$-tuple of primes for infinitely many $n \in \N$.

\bigskip

{\Large\section{Proof of Theorem~\ref{thm:main}}}

\begin{lemma}\label{lem:Carm-criterion}
Suppose  $b,c,d,e\in\N$ satisfy the conditions
\begin{gather}
\label{eq:crazy}
b,c,d\text{~are coprime in pairs},\\
\label{eq:that}
cde+c+d\equiv 0\bmod b,\\
\label{eq:darn}
bde+b+d\equiv 0\bmod c,\\
\label{eq:cat}
bce+b+c\equiv 0\bmod d.
\end{gather}
If $n\equiv e\bmod bcd$ and the numbers
$$
r\defeq bn+1,\qquad
s\defeq cn+1\mand
t\defeq dn+1
$$
are distinct primes, then $rst$ is a Carmichael number.
\end{lemma}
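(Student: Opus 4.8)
The plan is to verify Korselt's criterion for the integer $N\defeq rst$. Recall that a squarefree composite integer $N$ is a Carmichael number if and only if $p-1$ divides $N-1$ for every prime $p$ dividing $N$. Since $r,s,t$ are assumed to be distinct primes, $N=rst$ is at once composite and squarefree, so it suffices to check that each of $r-1$, $s-1$, $t-1$ divides $N-1$.

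First I would expand the triple product in terms of $n$:
\[
N-1=(bn+1)(cn+1)(dn+1)-1=n\bigl(bcd\,n^2+(bc+bd+cd)\,n+(b+c+d)\bigr).
\]
Writing $M\defeq bcd\,n^2+(bc+bd+cd)\,n+(b+c+d)$, so that $N-1=nM$, and noting that $r-1=bn$, $s-1=cn$, $t-1=dn$ with $n\ne 0$ (as $r$ is prime), the three divisibility conditions reduce respectively to $b\mid M$, $c\mid M$ and $d\mid M$ after cancelling the common factor $n$.

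Next I would reduce $M$ modulo $b$, $c$ and $d$ in turn, using that $n\equiv e$ modulo each of $b,c,d$ (which follows from $n\equiv e\bmod bcd$). Modulo $b$ every term of $M$ carrying a factor $b$ vanishes, leaving $M\equiv cd\,n+c+d\equiv cde+c+d\equiv 0\bmod b$ by~\eqref{eq:that}; the reductions modulo $c$ and modulo $d$ are entirely symmetric and give $M\equiv bde+b+d\equiv 0\bmod c$ by~\eqref{eq:darn} and $M\equiv bce+b+c\equiv 0\bmod d$ by~\eqref{eq:cat}. Having all three divisibilities, Korselt's criterion then yields that $N=rst$ is a Carmichael number.

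This argument is a direct verification rather than anything delicate, so I do not anticipate a genuine obstacle; the only points deserving care are the bookkeeping in expanding $(bn+1)(cn+1)(dn+1)$ and the observation that, because of the common factor $n$, each condition $p-1\mid N-1$ collapses to a congruence modulo one of $b,c,d$ — and the precise forms of~\eqref{eq:that}--\eqref{eq:cat} are exactly what makes those congruences hold. (The pairwise coprimality hypothesis~\eqref{eq:crazy} is not actually invoked in this lemma; it is recorded here because it will be needed when quadruples $b,c,d,e$ satisfying~\eqref{eq:that}--\eqref{eq:cat} are constructed.)
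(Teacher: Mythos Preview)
Your proof is correct and follows essentially the same route as the paper's: expand $N-1=rst-1$ as $n$ times a polynomial in $n$, then reduce modulo $b,c,d$ and invoke \eqref{eq:that}--\eqref{eq:cat}. The only difference is cosmetic --- the paper phrases the criterion via the Carmichael function, computing $\lambda(N)=bcdn$ (and so it \emph{does} use \eqref{eq:crazy}, contrary to your parenthetical remark), whereas you apply Korselt's criterion to each prime factor separately and thereby sidestep the coprimality hypothesis.
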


\begin{proof}
Let $\lambda$ denote the Carmichael function. A composite number $N$
is a Carmichael number if and only if $\lambda(N)\mid N-1$.

Put $N\defeq rst$.
Using \eqref{eq:crazy} we have $\lambda(N)=bcdn$, which divides
$$
N-1=bcdn^3+(bc+bd+cd)n^2+(b+c+d)n
$$
if and only if
$$
bcd\mid(bc+bd+cd)n+b+c+d.
$$
Since $n\equiv e\bmod bcd$, the last condition follows
from \eqref{eq:that}--\eqref{eq:cat}.
\end{proof}

\begin{proof}[Proof of Theorem~\ref{thm:main}]
For any prime $p$, let $v_p$ be the standard $p$-adic valuation.

Let $a,q$ be fixed coprime natural numbers,
and let $\hat a$ be an integer such that
$a\hat a\equiv 1\bmod q$. For any prime $p\mid q$ let
$$
\alpha_p\defeq v_p(q),\qquad
\beta_p\defeq v_p(a-1),\qquad
\hat\beta_p\defeq v_p(\hat a-1),
$$
and put
$$
q_\sharp
\defeq\sprod{p\,\mid\,q\\\beta_p<\alpha_p}p^{\alpha_p},\qquad
q_\flat
\defeq\sprod{p\,\mid\,q\\\beta_p\ge\alpha_p}p^{\alpha_p}.
$$
Clearly,
\be\label{eq:Leo}
q=q_\sharp q_\flat,\qquad
\gcd(q_\sharp,q_\flat)=1,\qquad
a\equiv \hat a\equiv 1\bmod q_\flat.
\ee
From the theory of valuations it is immediate that
$\beta_p=\hat\beta_p$ for any prime
$p\mid q_\sharp$. We define
$$
n_p\defeq\frac{a-1}{p^{\beta_p}}\in\N\mand
\hat n_p\defeq\frac{\hat a-1}{p^{\beta_p}}\in\N
$$
for such primes, and so we have $\gcd(n_p\hat n_p,p)=1$.

Let $e$ be a natural number such that
\be\label{eq:cong1}
e\equiv p^{\beta_p}\bmod p^{\alpha_p}\qquad
(p\mid q_\sharp);
\ee
such integers $e$ exist by the Chinese Remainder Theorem (CRT).
We define
$$
e_p\defeq\frac{e}{p^{\beta_p}}\in\N\qquad(p\mid q_\sharp).
$$
Let $b,c,d$ be large and distinct primes for which the congruences
\dalign{
b\,e_p&\equiv n_p\bmod p^{\alpha_p-\beta_p},\\
c\,e_p&\equiv n_p\bmod p^{\alpha_p-\beta_p},\\
d\,e_p&\equiv \hat n_p\bmod p^{\alpha_p-\beta_p},
}
hold for all primes $p\mid q_\sharp$ (such $b,c,d$
exist because $\gcd(e_pn_p\hat n_p,p)=1$).
After multiplying the above congruences by $p^{\beta_p}$, we get that
\be\label{eq:ball}
\begin{split}
b\,e+1&\equiv a\bmod p^{\alpha_p},\\
c\,e+1&\equiv a\bmod p^{\alpha_p},\\
d\,e+1&\equiv \hat a\bmod p^{\alpha_p},
\end{split}
\ee
for all $p\mid q_\sharp$.
Moreover, taking into account \eqref{eq:Leo}, the same
congruences hold for any prime
$p\mid q_\flat$ provided that
\be\label{eq:cong2}
e\equiv 0\bmod q_\flat.
\ee
Assuming \eqref{eq:cong1} and \eqref{eq:cong2}, by the CRT
and \eqref{eq:ball} it follows that
\be\label{eq:eureka}
(b\,e+1)(c\,e+1)(d\,e+1)\equiv a\bmod q.
\ee

In the above construction, we choose the three primes $b,c,d$ large
enough so that each one exceeds $qA$, where 
$$
A\defeq \sprod{p\le k\\p\,\nmid\,q}p.
$$
From now on, we suppose that
\be\label{eq:cong3}
e\equiv 0\bmod A
\ee
(since $\gcd(A,q)=1$, this is compatible with the
conditions \eqref{eq:cong1} and \eqref{eq:cong2}
already imposed on $e$). Let
\be\label{eq:bj-defn}
b_j\defeq b+Acdqj\qquad(j=1,\ldots,k),
\ee
and denote
$$
B\defeq b_1\cdots b_kcd.
$$
Our choices of $b,c,d,A$ ensure that the numbers
$b_1,\ldots,b_k,c,d$ are coprime in pairs, and 
$\gcd(B,q)=1$. Therefore, in addition to \eqref{eq:cong1}, \eqref{eq:cong2} and \eqref{eq:cong3},
using the CRT along with \eqref{eq:bj-defn}
we can further arrange for the integer $e$ to satisfy
the congruence conditions
\be\label{eq:ghost}
\begin{split}
cde+c+d&\equiv 0\bmod b_j,\\
b_jde+b_j+d&\equiv 0\bmod c,\\
b_jce+b_j+c&\equiv 0\bmod d,
\end{split}
\ee
for every $j$.

To finish the proof, let
$$
\cZ\defeq\{b_1,\ldots,b_k,c,d\},
$$
let $\cH(X)$ be the $(k+2)$-tuple
comprised of the linear forms
$$
F_z(X)\defeq z(e+ABqX)+1\qquad(z\in\cZ),
$$
and put
$$
f_\cH(X)\defeq \prod_{z\in\cZ} F_z(X).
$$
Under the Prime $k$-tuple Conjecture, the numbers
\dalign{
r_j&\defeq b_j(e+ABqm)+1\qquad(j=1,\ldots,k),\\
s&\defeq c(e+ABqm)+1,\\
t&\defeq d(e+ABqm)+1,
}
are simultaneously prime for infinitely many $m\in\N$ provided that
$f_\cH(X)$ is admissible and \eqref{eq:vandermonde}
holds (with $g_i,h_i$ suitably defined). Assuming this
for the moment, let $m$ be one such integer (fixed),
and let $n\defeq e+ABqm$. For each $j$, using \eqref{eq:ghost}
and the fact that $n\equiv e\bmod b_jcd$, 
Lemma~\ref{lem:Carm-criterion} shows that $r_jst$ is a
Carmichael number.
Since $r_1<\cdots<r_k$ is an arithmetic progression
(see \eqref{eq:bj-defn}), $r_1st<\cdots<r_kst$ is an
arithmetic progression of Carmichael numbers. Using
\eqref{eq:eureka} and \eqref{eq:bj-defn}, we also have
$$
r_jst\equiv a\bmod q\qquad(j=1,\ldots,k).
$$
Since $k$ is arbitrary, the theorem follows.

It remains to verify the conditions of the
Prime $k$-tuple Conjecture.

To see that $f_\cH(X)$
is admissible, observe that for any fixed $z\in\cZ$ the set
$$
\cS_z\defeq\{n\bmod p : z(e+ABqn)+1 \equiv 0 \bmod p\}
$$
has cardinality one if $p\nmid ABq$; for such primes we have
$p>k$ (since $p\nmid Aq$), and thus
$$
\big|\big\{n\bmod p : f_\cH(n)
\equiv 0 \bmod p\big\}\big|
=\big|{\textstyle\bigcup_z\cS_z}\big|=k<p,
$$
as required. On the other hand, for primes $p\mid ABq$ we
claim that
\be\label{eq:zedisdead}
ze+1\not\equiv 0\pmod p\qquad (z\in\cZ),
\ee
which implies that
$$
\big|\big\{n\bmod p : f_\cH(n)
\equiv 0 \bmod p\big\}\big|
=\big|{\textstyle\bigcup_z\cS_z}\big|=0<p.
$$
Indeed, if $p\mid q$, then \eqref{eq:zedisdead} is
a consequence of \eqref{eq:ball} and \eqref{eq:bj-defn}.
When $p\mid B$ (in other words, $p\in\cZ$), \eqref{eq:zedisdead}
follows from \eqref{eq:ghost}. If $p\mid A$, then
\eqref{eq:zedisdead} is implied by \eqref{eq:cong3}.

Finally, writing $F_z(X)=g_zX+h_z$
with $g_z\defeq zABq$ and $h_z\defeq ze+1$
for each $z\in\cZ$, we have
$$
g_{z_1}h_{z_2}-g_{z_2}h_{z_1}
=ABq(z_1-z_2)\ne 0\qquad(z_1,z_2\in\cZ,~z_1\ne z_2),
$$
and \eqref{eq:vandermonde} follows.
\end{proof}

\end{document}